\DeclareSymbolFont{bbold}{U}{bbold}{m}{n}
\DeclareSymbolFontAlphabet{\mathbbm}{bbold}
\newcommand{\diag}{\mathrm{diag}}
\newtheorem{lemma}{Lemma}
\newtheorem{remark}{Remark}
\title{\LARGE \bf Scalable computation for
  optimal control of cascade systems with constraints}
\author { Michael Cantoni, Farhad Farokhi, Eric Kerrigan, Iman Shames
  \thanks{Cantoni, Farokhi and Shames are with the Department of
    Electrical and Electronic Engineering, The University of
    Melbourne, Australia (Email:
    \texttt{\{cantoni,farhad.farokhi,iman.shames\}@unimelb.edu.au}).
    Kerrigan is with the Departments of Electrical and Electronic
    Engineering and Aeronautics, Imperial College London, U.K. (Email:
    \texttt{e.kerrigan@imperial.ac.uk}). This work is supported by the
    Australian Research Council (LP130100605) and a McKenzie
    Fellowship.}}  \pagestyle{empty}
\begin{document}
\maketitle \thispagestyle{empty}
\begin{abstract}
  A method is devised for numerically solving a class of
  finite-horizon optimal control problems subject to cascade linear
  discrete-time dynamics. It is assumed that the linear state and
  input inequality constraints, and the quadratic measure of
  performance, are all separable with respect to the spatial dimension
  of the underlying cascade of sub-systems, as well as the temporal
  dimension of the dynamics. By virtue of this structure, the
  computation cost of an interior-point method for an equivalent
  quadratic programming formulation of the optimal control problem can
  be made to scale linearly with the number of sub-systems. However,
  the complexity of this approach grows cubically with the time
  horizon. As such, computational advantage becomes apparent in
  situations where the number of sub-systems is relatively large. In
  any case, the method is amenable to distributed computation with low
  communication overhead and only immediate upstream neighbour sharing
  of partial model data among processing agents. An example is
  presented to illustrate an application of the main results to model
  data for the cascade dynamics of an automated irrigation channel.
\end{abstract}

\section{Introduction}
The application of model predictive control involves solving
finite-horizon optimal control problems in a receding horizon
fashion~\cite{garcia1989model,mayne2000constrained,
  maciejowski2002predictive,rawlings2009model}. When the penalty
function used to quantify performance and the inequality constraints
on the system states and inputs all separate along the prediction
horizon, additional structure in the equality constraints that encode
the system dynamics can be exploited to devise efficient methods for
computing the solutions. In particular, methods with computation costs
that scale linearly with the time horizon and cubically with the
number of states and inputs are
well-known~\cite{Wright93,Rao98,diehl2009efficient,WangBoyd10,domahidi2012efficient}.
The cubic scaling of these methods in the spatial dimension of the
problem, however, can be a limiting factor within the context of
controlling large-scale interconnections of sub-systems. In this
paper, interconnection structure is exploited over temporal structure
to devise a more scalable method for problems with cascade dynamics in
particular; i.e., when the system to control is the series
interconnection of numerous sub-systems, each with a control input, an
interconnection input, and an interconnection output. Such models arise
in the study of irrigation and drainage
networks~\cite{li2005water,Soltanian,puig2009predictive},
mutli-reservoir and hydro-power
systems~\cite{labadie2004optimal,Morari}, vehicle
platoons~\cite{Seiler2004}, and supply chain
management~\cite{disney2003effect}, for example.

The proposed method for solving finite-horizon optimal control
problems with cascade dynamics is closely related to the
interior-point method developed
in~\cite{Wright93,Rao98}. Interchanging the roles of the temporal and
spatial dimensions of such problems yields linear scaling of
computation cost with the number of sub-systems along the
cascade. However, the complexity grows cubically with the time
horizon, despite the causal flow of information in the temporal
dimension. The development illuminates the difficulty of overcoming
such cubic dependence. Computational advantage over methods that
exploit temporal structure, rather than the spatial structure
exploited here, arises when the length of the cascade is relatively
large compared to the prediction horizon. In any case, the method is
amenable to distributed computation over a linear network of
processing agents, one for each sub-system, with limited
neighbour-to-neighbour communication, and only partial sharing of
model information between neighbouring agents.

The paper is organized as follows. The class of finite-horizon optimal
control problems studied is defined in Section~\ref{sec:FHOC}. The
formulation of an equivalent quadratic program (QP) is given in
Section~\ref{sec:QP}. A scalable interior point method for computing
an optimal solution of the structured QP is developed in
Section~\ref{sec:IPM}. Proofs are deferred to the Appendix. Finally, a
numerical example based on model data for an automated irrigation
channel is presented Section~\ref{sec:examp}. Some concluding remarks
are provided in Section~\ref{sec:conc}.

\section{Problem Formulation}

A class of finite-horizon optimal control problems with cascade
dynamics is defined in this section. In addition to the directed
interconnection structure of the underlying cascade of sub-systems, a
defining characteristic of this class is the separability of the state
and input inequality constraints and performance index across both the
spatial and temporal dimensions of the system dynamics. An equivalent
QP with computationally favorable structure is formulated in
Section~\ref{sec:QP}.

\subsection{Constrained finite-horizon optimal control}
\label{sec:FHOC}


Consider the cascade of $N\in\mathbb{N}$ linear discrete-time
dynamical sub-systems modelled by
\begin{align}
  x_{j}(t+1) &= A_j(t) x_j(t) + B_j(t) u_j(t) + E_j(t) x_{j-1}(t), \label{eq:tempmodel} 
\end{align}
given initial conditions $x_j(0) = \xi_{j}\in\mathbb{R}^{n_j}$ and
model data $A_j(t)\in\mathbb{R}^{n_j\times n_j}$,
$B_j(t)\in\mathbb{R}^{n_j\times m_j}$, and
$E_j(t)\in\mathbb{R}^{n_j\times n_{j-1}}$, with $E_1(t)=0$ so that the
spatial boundary value $x_0(t)$ is effectively zero, for
$j=1,\ldots,N$ and $t=0,\ldots,T-1$. The parameter $T\in\mathbb{N}$ is
a specified time (or prediction) horizon. The problem of interest is
to set $u_j(t)$, for each sub-system index $j=1,\ldots,N$ and sample
time (indexed by $t=0,\ldots,T-1$), in order to minimize the separable
penalty function
\begin{align} \label{eq:cost}
  J = \frac{1}{2}\sum_{j=1}^N \left(  \left(\sum_{t=0}^{T-1} 
\begin{bmatrix} x_j(t)^\top & u_j(t)^\top  \end{bmatrix} 
\begin{bmatrix}
Q_j(t) & S_j(t)^\top \\
S_j(t) & R_j(t)
\end{bmatrix}
              \begin{bmatrix} x_j(t) \\ u_j(t)  \end{bmatrix} \right)
  + x_j(T)^\top P_j
  x_j(T) \right),
\end{align} 
subject to separable inequality constraints
\begin{align} \label{eq:constraints}
M_j(t) x_j(t) &+ L_j(t) u_j(t) \leq c_j(t)~\text{ for }~ j=1,\ldots,N,
                \text{ and } t=0,\ldots,T, 
\end{align}
where
\begin{align} \label{eq:QSRcond}
\begin{bmatrix}
Q_j(t) & S_j(t)^\top \\
S_j(t) & R_j(t)
\end{bmatrix}\succeq 0,
\end{align}
$M_j(t)\in\mathbb{R}^{\nu_j\times n_j}$,
$L_j(t)\in\mathbb{R}^{\nu_j\times m_j}$ and
$c_{j}(t)\in\mathbb{R}^{\nu_j}$, with
$0\prec R_j(t)=R_j(t)^\top\in\mathbb{R}^{m_j\times m_j}$, $M_j(0)=0$,
$L_j(T)=0$, $S_j(T)=0$, and $Q_j(T)=P_j=P_j^\top\succeq 0$, are given
for $t=0,\ldots,T$ and $j=1,\ldots, N$. Note that (\ref{eq:QSRcond})
implies $Q_j(t)=Q_j(t)^\top\succeq 0$ and
$Q_j(t) - S_j(t)^\top R_j(t)^{-1}S_j(t) \succeq 0$, since
$R_j(t)\succ 0$.

It is possible to reformulate the optimal control problem defined
above in a number of ways that result in standard QPs. Following the
style of QP reformulation in~\cite{Wright93} leads to an
interior-point method involving the solution of linear algebra
problems with favourable block tridiagonal structure. This is
exploited in Section~\ref{sec:IPM} to devise an algorithm with
per-iteration computation cost that scales linearly with cascade
length $N$.

\subsection{A QP formulation}
\label{sec:QP}
First note that the equality constraint corresponding to the dynamics
(\ref{eq:tempmodel}) can be reformulated as follows.
Define, for $j=1,\ldots,N$,
\begin{align*}
\hat{u}_j &= \begin{bmatrix} u_j(0)^\top & \cdots &
  u_j(T-1)^\top \end{bmatrix}^\top \in\mathbb{R}^{m_j T}
\quad\text{and}\quad
\hat{x}_j &= \begin{bmatrix} x_j(0)^\top & \cdots &
  x_j(T)^\top \end{bmatrix}^\top \in\mathbb{R}^{n_j (T+1)}. 
\end{align*}
Then 
\begin{align} \label{eq:spacemodel} -\hat{A}_j \hat{x}_j + \hat{E}_j
  \hat{x}_{j-1} + \hat{B}_j \hat{u}_j + \hat{H}_j \xi_{j} = 0,
\end{align}
where 
\begin{align*}
&\hat{A}_j = \begin{bmatrix}
I & 0 & \cdots & \cdots & 0 \\
-A_j(0) & I & \ddots &  & \vdots \\
0 & \ddots & \ddots & \ddots & \vdots\\
\vdots & \ddots & \ddots & \ddots & 0 \\
0 & \cdots & 0 & -A_j(T-1)& I
\end{bmatrix}, \quad
\hat{E}_j = \begin{bmatrix}
0 & \cdots  & \cdots & \cdots & 0 \\
E_j(0) & \ddots  &   & & \vdots \\
0 & E_j(1) & \ddots  &   & \vdots \\
\vdots &  \ddots & \ddots & \ddots &  \vdots \\
0 & \cdots & 0 & E_j(T-1) & 0  
\end{bmatrix},\\
&\hat{B}_j = \begin{bmatrix}
0 & \cdots & \cdots & 0 \\
B_j(0) & \ddots    &    &   \vdots \\
0 & \ddots & \ddots &   \vdots \\
\vdots &  \ddots   & \ddots &  0 \\
0 & \cdots & 0 & B_j(T-1)   
\end{bmatrix}, \quad\text{and}\quad
\hat{H}_j = \begin{bmatrix} I \\ 0 \\ \vdots \\ \vdots \\ 0 \end{bmatrix},
\end{align*}
for $j=1,\ldots,N$. The boundary condition for (\ref{eq:spacemodel})
is effectively $\hat{x}_{0}=0$ as $\hat{E}_1=0$. Moreover, with
\begin{align*} 
 \hat{Q}_j &= \diag(Q_j(0),\ldots,Q_j(T)) \in \mathbb{R}^{n_j(T+1)
  \times n_j(T+1)},\\ 
  \hat{R}_j &= \diag(R_j(0),\ldots,R_j(T-1)) \in \mathbb{R}^{m_jT \times
              m_jT}, \\ 
 \hat{S}_j &= \begin{bmatrix} 
              \diag(S_j(0),\ldots, S_j(T)) & 0 \end{bmatrix}
                          \in \mathbb{R}^{m_jT\times n_j (T+1)},\\
  \hat{M}_j &=\diag(M_j(0),\ldots,M_j(T)) 
  \in \mathbb{R}^{\nu_j(T+1) \times n_j(T+1)},\\
  \hat{L}_j &= \begin{bmatrix} 
    \diag(L_j(0),\ldots, L_j(T-1))^\top & 0 \end{bmatrix}^\top \in \mathbb{R}^{\nu_j (T+1) \times
  m_jT},
\end{align*}
and
$\hat{c}_j = \begin{bmatrix} c_j^\top(0) & \cdots &
  c_j^\top(T) \end{bmatrix}^\top\in \mathbb{R}^{\nu_j
  (T+1)}$,
the performance index and constraints can be reformulated as
\begin{align} \label{eq:cost2} J = \frac{1}{2}\sum_{j=1}^N
  \left(\hat{x}_j^\top \hat{Q}_j \hat{x}_j + \hat{u}_j^\top \hat{R}_j
    \hat{u}_j+2\hat{u}_j^\top \hat{S}_j \hat{x}_j\right)
\end{align}
and
\begin{align} \label{eq:constraints2} \hat{M}_j \hat{x}_j + \hat{L}_j
  \hat{u}_j - \hat{c}_j \leq 0 \text{ for } j=1,\ldots,N,
\end{align}
respectively.  To summarize, the problem of minimizing (\ref{eq:cost})
subject to the equality constraints (\ref{eq:tempmodel}), and
inequality constraints (\ref{eq:constraints}), is now in the form of
the following standard QP:
\begin{align*}
\min_{\substack{ {(\hat{u}_1,\ldots,\hat{u}_N)\in\mathbb{R}^{m_1
  T}\times\cdots\times \mathbb{R}^{m_NT}} \\ {
(\hat{x}_1,\ldots,\hat{x}_N)\in\mathbb{R}^{n_1
  (T+1) }\times\cdots\times \mathbb{R}^{n_N (T+1) }}}}
  (\ref{eq:cost2})
\quad  \text{ subject to }\quad (\ref{eq:spacemodel}) \text{ and } (\ref{eq:constraints2}). 
\end{align*}

\section{Developing an interior-point method that scales linearly
  with cascade length} \label{sec:IPM}

A primal-dual interior-point method for solving a QP involves the
application of Newton's iterative method to solve the
Karush--Kuhn--Tucker (KKT) conditions~\cite{Mehrotra92,Wright97}.
For the case at hand, the KKT conditions take the following form, for
$j=1,\ldots,N$, where $\hat{x}_0=0$ (since $\hat{E}_1=0$),
$\hat{E}_{N+1}=0$,
$\Lambda_j=\diag((\lambda_{j})_1,\ldots,(\lambda_{j})_{\nu_j})$,
$\Theta_j=\diag((\theta_{j})_{1},\ldots,(\theta_{j})_{\nu_j})$, and
$\mathbf{1}$ denotes a column vector of ones:
\begin{align*}
  \hat{Q}_j\hat{x}_j + \hat{S}_j^\top\hat{u}_j - \hat{A}_j^\top p_j 
  + \hat{M}_j^\top \lambda_j + \hat{E}_{j+1}^\top p_{j+1} 
  &=  0; \\
  \hat{S}_j \hat{x}_j+\hat{R}_j\hat{u}_j + \hat{B}_j^\top p_j + \hat{L}_j^\top \lambda_j 
  &=  0;\\
  -\hat{A}_j\hat{x}_j + \hat{E}_j\hat{x}_{j-1} + \hat{B}_j \hat{u}_j + 
  \hat{H}_j\xi_j
  &=  0;\\
  \hat{M}_j \hat{x}_j + \hat{L}_j \hat{u}_j - \hat{c}_j + \theta_j
  &=  0;\\
  \Lambda_j \Theta_j \mathbf{1}
  &=  0;\\
\begin{bmatrix} \lambda_i^\top & \theta_j^\top \end{bmatrix}^\top &\geq 0.
\end{align*}
Given
$s[k] = \begin{bmatrix} s_1[k]^\top & \cdots &
  s_N[k]^\top \end{bmatrix}^\top$, with
$s_j[k]= \begin{bmatrix} \hat{x}_j[k]^\top & \hat{u}_j[k]^\top &
  p_j[k]^\top & \lambda_j[k]^\top & \theta_j[k]^\top
\end{bmatrix}^\top$
for $j=1,\ldots,N$, the iteration at Newton step $k\in\mathbb{N}$ is
given by
\begin{align} \label{eq:iteration}
  s[k+1] =  s[k] + \alpha[k] \cdot \delta[k],
\end{align}
where $\alpha[k] > 0$ is a sufficiently small step-size parameter
selected online so that $\lambda_j[k+1] \geq 0$, $\theta_j[k+1] > 0$,
and
$\delta[k]=\begin{bmatrix} \delta_1[k]^\top & \cdots &
  \delta_N[k]^\top \end{bmatrix}^\top$
is the solution of the linearized KKT conditions
\begin{align} \label{eq:linearized}
\! \begin{bmatrix}
D_1[k] & -\Upsilon_2^\top & 0 & \ldots & 0 \\
-\Upsilon_2 & D_2[k] & -\Upsilon_3^\top & \ddots & \vdots \\
0 & -\Upsilon_3 & \ddots & \ddots & 0 \\
\vdots & \ddots    & \ddots &  D_{N-1}[k]  & -\Upsilon_N^\top \\
0 & \cdots & 0 & -\Upsilon_N & D_N[k] 
\end{bmatrix}
\!\! \begin{bmatrix}
\delta_1[k] \\ \delta_2[k] \\ 
\vdots \\  
\vdots \\
\delta_N[k]
\end{bmatrix} = \begin{bmatrix}
\rho_1[k] \\ \rho_2[k] \\ 
\vdots  \\ 
\vdots \\
\rho_N[k]
\end{bmatrix}\!,
\end{align}
and the following hold for $j=1,\ldots,N$, with $\Upsilon_{N+1}=0$
(since $\hat{E}_{N+1}=0$):
\begin{align}
  D_j[k] &=
\begin{bmatrix}
\hat{Q}_j & \hat{S}_j^\top & -\hat{A}_j^\top & \hat{M}_j^\top & 0 \\
\hat{S}_j & \hat{R}_j & \hat{B}_j^\top & \hat{L}_j^\top & 0 \\
-\hat{A}_j & \hat{B}_j & 0 & 0 & 0 \\
\hat{M}_j & \hat{L}_j & 0 & 0 & I \\
0 & 0 & 0 & \Theta_j[k] & \Lambda_j[k]
\end{bmatrix}; \label{eq:Dj}
\end{align}
\begin{align}
\Upsilon_j &=
\begin{bmatrix}
0 & 0 & 0 & 0 & 0 \\
0 & 0 & 0 & 0 & 0 \\
-\hat{E}_j & 0 & 0 & 0 & 0 \\
0 & 0 & 0 & 0 & 0 \\
0 & 0 & 0 & 0 & 0 
\end{bmatrix}; \label{eq:Upsilonj}
\end{align}
\begin{align}
\rho_j[k] =-\Upsilon_j s_{j-1}[k] + D_j[k] s_j[k] - \Upsilon_{j+1}^\top
s_{j+1}[k]
+ \begin{bmatrix} 0^\top & 0^\top & (\hat{H}_j\xi_j)^\top &
\hat{c}_j^\top & \sigma_j[k]^\top\end{bmatrix}^\top;
\end{align}
$\sigma_j[k] = -\Theta_j[k]\lambda_j[k] - \Lambda_j[k] \theta_j[k] +
\Lambda_j[k] \Theta_j[k] \mathbf{1} - \bar{\sigma} \mu[k] \mathbf{1}$;
$\Lambda_j[k] =
\diag((\lambda_j[k])_1,\ldots,(\lambda_{j}[k])_{\nu_j})$;
$\Theta_j[k] = \diag((\theta_{j}[k])_1,\ldots,(\theta_{j}[k])_{\nu_j})$;
$\bar{\sigma} \in (0,1)$ is a centring parameter; and
$\mu[k] = (\sum_{j=1}^N (\lambda_j[k])^\top \theta_j[k])/(\sum_{j=1}^N
\nu_j)$
is the duality gap. Given an appropriate initialization $s[1]$, the
linear equation (\ref{eq:linearized}) has a unique solution for each
iteration (\ref{eq:iteration}), as seen below. Construction of this
solution is facilitated by the block tridiagonal structure; e.g.,
see~\cite{Meurant92} 
and~\cite{Bevilacqua88}. Indeed, the computation cost of solving the
set of equations (\ref{eq:linearized}) at each iteration can be made
to scale linearly with $N$; ignoring structure would incur order $N^3$
complexity. Proofs of the following results, which underpin this
assertion, can be found in the Appendix.

\begin{lemma} \label{lem:solved} Dropping explicit dependence on
  the algorithm iteration index $k\in\mathbb{N}$, the unique solution
  of~\eqref{eq:linearized} is given by the backward and forward
  recursions
\begin{align*}
\tilde{\rho}_{j}
\!=\!
\begin{cases}
\rho_{N} & \text{for } j\!=\!N  \\
\rho_{j}+\Upsilon_{j+1}^\top \Sigma_{j+1}^{-1}\tilde{\rho}_{j+1} & \text{for } j\!=\!N-1,\dots,1
\end{cases}\!,
~\text{and }~
\delta_j\!=\!
\begin{cases}
\Sigma_1^{-1}\tilde{\rho}_1& \text{for } j\!=\!1 \\
\Sigma_j^{-1}(\tilde{\rho}_j+\Upsilon_j\delta_{j-1})  & \text{for } j\!=\!2,\dots,N
\end{cases}\!,
\end{align*}
respectively, where 
\begin{align}
\label{eq:sigma}
\Sigma_j &= \begin{cases}
D_N & \text{for } j=N\\
D_j - \Upsilon_{j+1}^\top(\Sigma_{j+1})^{-1} \Upsilon_{j+1} &
\text{for } j=N-1,\ldots,1
\end{cases}.
\end{align}
In particular, each $\Sigma_{j}\in\mathbb{R}^{p_j\times p_j}$, where
$p_j=((2n_j+2\nu_j)(T+1)+m_jT)$, is non-singular as required.
\end{lemma}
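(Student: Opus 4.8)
The plan is to realise the stated recursions as block Gaussian elimination applied to the block tridiagonal system \eqref{eq:linearized} from the last block upward, and then to justify that every pivot $\Sigma_j$ is non-singular so that the elimination---and the subsequent back-substitution---is well defined. Writing \eqref{eq:linearized} block-rowwise as $D_i\delta_i-\Upsilon_{i+1}^\top\delta_{i+1}-\Upsilon_i\delta_{i-1}=\rho_i$ (with the boundary conventions $\Upsilon_{N+1}=0$ and no $\Upsilon_1$ term), I would solve the last equation for $\delta_N=\Sigma_N^{-1}(\rho_N+\Upsilon_N\delta_{N-1})$ with $\Sigma_N=D_N$, substitute into the $(N-1)$-st equation, and continue. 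A straightforward induction then shows that after eliminating blocks $N,\dots,j+1$ the $j$-th equation collapses to $\Sigma_j\delta_j-\Upsilon_j\delta_{j-1}=\tilde{\rho}_j$, with $\Sigma_j$ and $\tilde{\rho}_j$ exactly as in \eqref{eq:sigma} and in the backward recursion of the statement. At $j=1$ this reads $\Sigma_1\delta_1=\tilde{\rho}_1$, and the forward recursion for $\delta_j$ is plain back-substitution. This part is mechanical; the entire content of the lemma is the non-singularity of each $\Sigma_j$.

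For that I would introduce the trailing principal block submatrix $\mathcal{T}_j$ of \eqref{eq:linearized} formed by block rows and columns $j,\dots,N$, and observe that $\Sigma_j$ is precisely the Schur complement of $\mathcal{T}_{j+1}$ in $\mathcal{T}_j$ (the coupling between block $j$ and $\mathcal{T}_{j+1}$ meets only its leading block, whose inverse contributes the term $\Upsilon_{j+1}^\top\Sigma_{j+1}^{-1}\Upsilon_{j+1}$). Hence $\det\mathcal{T}_j=\det\Sigma_j\,\det\mathcal{T}_{j+1}$, and by downward induction it suffices to prove that each $\mathcal{T}_j$ is non-singular. Since $\mathcal{T}_j$ is itself the linearised KKT matrix of the sub-cascade $j,\dots,N$ with zero upstream boundary, I would show $\mathcal{T}_j V=0\Rightarrow V=0$ for $V=(v_j,\dots,v_N)$, $v_i=(\delta x_i,\delta u_i,\delta p_i,\delta\lambda_i,\delta\theta_i)$, by an energy argument. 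Left-multiplying the $\hat{x}$- and $\hat{u}$-block equations of row $i$ by $\delta x_i^\top$ and $\delta u_i^\top$ and summing over $i$, the costate-coupled terms combine, after reindexing the $\hat{E}_{i+1}^\top\delta p_{i+1}$ contributions, into $\sum_i(-\hat{A}_i\delta x_i+\hat{B}_i\delta u_i+\hat{E}_i\delta x_{i-1})^\top\delta p_i$, which vanishes by the dynamics block-rows; the multiplier terms reduce, via the slack and complementarity block-rows together with $\Theta_i\succ0$ and $\Lambda_i\succeq0$, to $\sum_i\delta\theta_i^\top\Theta_i^{-1}\Lambda_i\,\delta\theta_i\ge0$. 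What survives is
\begin{equation*}
\sum_{i=j}^{N}
\begin{bmatrix}\delta x_i\\ \delta u_i\end{bmatrix}^{\!\top}
\begin{bmatrix}\hat{Q}_i & \hat{S}_i^\top\\ \hat{S}_i & \hat{R}_i\end{bmatrix}
\begin{bmatrix}\delta x_i\\ \delta u_i\end{bmatrix}
+\sum_{i=j}^{N}\delta\theta_i^\top \Theta_i^{-1}\Lambda_i\,\delta\theta_i = 0 ,
\end{equation*}
a sum of non-negative terms by \eqref{eq:QSRcond}, so every term is zero.

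The remaining, genuinely delicate, step is to pass from the vanishing of each quadratic form to $V=0$: positive semidefiniteness of the $Q$--$S$--$R$ blocks with only $\hat{R}_i\succ0$ does not by itself force $(\delta x_i,\delta u_i)=0$. Completing the square gives $\delta u_i=-\hat{R}_i^{-1}\hat{S}_i\delta x_i$, and substituting this into the dynamics block-row yields $(\hat{A}_i+\hat{B}_i\hat{R}_i^{-1}\hat{S}_i)\delta x_i=\hat{E}_i\delta x_{i-1}$. Here I would exploit the temporal structure: $\hat{A}_i$ is block lower triangular with identity diagonal blocks while $\hat{B}_i\hat{R}_i^{-1}\hat{S}_i$ is strictly block lower triangular, so $\hat{A}_i+\hat{B}_i\hat{R}_i^{-1}\hat{S}_i$ is invertible. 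A forward sweep in the cascade index $i=j,\dots,N$, started from the zero upstream boundary $\delta x_{j-1}=0$, then gives $\delta x_i=0$ and hence $\delta u_i=0$ for all $i$. The slack block-rows give $\delta\theta_i=0$, complementarity with $\Theta_i\succ0$ gives $\delta\lambda_i=0$, and a backward sweep through the costate block-rows using invertibility of $\hat{A}_i^\top$ gives $\delta p_i=0$. Thus $\mathcal{T}_j$ is non-singular for every $j$, each $\Sigma_j$ is non-singular by the determinant identity, and non-singularity of $\mathcal{T}_1$ yields the asserted uniqueness of the solution of \eqref{eq:linearized}. I expect the invertibility of $\hat{A}_i+\hat{B}_i\hat{R}_i^{-1}\hat{S}_i$ via the lower-triangular structure to be the crux, since it is exactly what converts the weak semidefiniteness available from the cost into a definiteness conclusion propagated along the cascade.
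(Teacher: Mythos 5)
Your proposal is correct, and the elimination half of it coincides with the paper's proof: the paper likewise obtains the recursions by eliminating from block $N$ upward, observing $\delta_N=\Sigma_N^{-1}\rho_N+\Sigma_N^{-1}\Upsilon_N\delta_{N-1}$ and collapsing the system one block at a time. Where you genuinely diverge is in establishing that every pivot $\Sigma_j$ is non-singular. The paper does this locally and algebraically, via the technical Lemma~\ref{lem:QRS} in the appendix: that lemma shows any matrix with the structure of $D_j$ is invertible (given $\hat{R}_j\succ 0$, $\hat{Q}_j-\hat{S}_j^\top\hat{R}_j^{-1}\hat{S}_j\succeq 0$, $\tilde{A}_j$ invertible, $\Theta_j\succ 0$, $\Lambda_j\succeq 0$) \emph{and} that the $33$-block of its inverse is negative semidefinite; since $\Upsilon_{j+1}^\top\Sigma_{j+1}^{-1}\Upsilon_{j+1}$ touches only the $11$-block, the recursion \eqref{eq:sigma} replaces $\hat{Q}_j$ by $\check{Q}_j=\hat{Q}_j-\hat{E}_{j+1}^\top(\Sigma_{j+1}^{-1})_{33}\hat{E}_{j+1}\succeq\hat{Q}_j\succeq 0$, so the same lemma re-applies inductively down the cascade, the invariant being the sign of the $33$-block. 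You instead argue globally: you identify $\Sigma_j$ as the nested Schur complement of the trailing principal submatrix $\mathcal{T}_{j+1}$ in $\mathcal{T}_j$, and prove every $\mathcal{T}_j$ non-singular by a null-space/energy argument --- pairing the stationarity rows with the dynamics rows so the costate terms telescope to zero, reducing the multiplier terms via the slack and complementarity rows to $\sum_i\delta\theta_i^\top\Theta_i^{-1}\Lambda_i\delta\theta_i\geq 0$, and then, correctly recognizing that semidefiniteness of \eqref{eq:QSRcond} alone is not enough, closing the kernel argument through $\delta u_i=-\hat{R}_i^{-1}\hat{S}_i\delta x_i$ and the unit-lower-block-triangular invertibility of $\tilde{A}_i=\hat{A}_i+\hat{B}_i\hat{R}_i^{-1}\hat{S}_i$, exactly the structural fact the paper also leans on. Both routes are sound; yours is more conceptual (it is the standard ``KKT matrix of a strictly-convex-in-$u$ LQ problem'' argument, and it makes transparent the role of the zero upstream boundary), and it avoids the congruence and spectral bookkeeping of Lemma~\ref{lem:QRS}. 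What the paper's route buys in exchange is a constructive by-product your argument does not deliver: the explicit formula and sign $(\Sigma_j^{-1})_{33}\preceq 0$, which the paper reuses in Lemma~\ref{lem:invertSigma} (e.g., to get $\tilde{H}_j\succeq 0$ in \eqref{eq:Htil} and the inversion formulas), so with your proof of Lemma~\ref{lem:solved} one would still need a Lemma~\ref{lem:QRS}-type computation later.
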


\begin{remark} Using Lemma~\ref{lem:solved} to solve
  (\ref{eq:linearized}) incurs a computation cost that scales linearly
  with the number of sub-systems $N$. Unfortunately, the $11$-block of
  $\Sigma_j$ (c.f. $\check{Q}_j\succeq 0$ in the proof of
  Lemma~\ref{lem:solved}) becomes a full matrix for $j=N-1,\ldots,1$,
  despite the sparsity of $\hat{Q}_j$, $\hat{R}_j$, $\hat{S}_j$,
  $\hat{A}_j$, $\hat{B}_j$, $\hat{M}_j$, $\hat{L}_j$, $\Theta_j$, and
  $\Lambda_j$. Therefore, inverting $\Sigma_j$ in the recursions above
  incurs cost that scales cubically with $T$, yielding an overall
  computation cost of order $NT^3$.
\end{remark}
%

The following result encapsulates a method for inverting the matrix
$\Sigma_j$. This method involves the inverses of block diagonal and
block bi-diagonal matrices of order $T\times T$, and the inverse of
one unstructured positive-definite $(m_jT) \times (m_jT)$ matrix; note
that typically $m_j$ is smaller than $n_j$ and $\nu_j$. The
computation cost is order $T$ and $T^3$, respectively, for an overall
cost that scales cubically with the time horizon.
The matrix $\hat{Q}_j$ is not required to be non-singular, as needed
to follow steps used to invert similarly structured matrices
in~\cite{WangBoyd10,domahidi2012efficient}, for example. Furthermore,
the method is amenable to distributed implementation with low
communication overhead, as also discussed in more detail subsequently.
\begin{lemma} 
  \label{lem:invertSigma} With $\Sigma_j$ defined according to
  \eqref{eq:sigma}, the unique solution of the linear equations
\begin{align} \label{eq:tosolve}
\Sigma_j
\begin{bmatrix}
X_1 \\ X_2 \\ X_3 \\ X_4 \\ X_5
\end{bmatrix}
= 
\begin{bmatrix}
\hat{Q}_j-\hat{E}_{j+1}^\top (\Sigma_{j+1}^{-1})_{33}\hat{E}_{j+1} &
\hat{S}_j^\top &  -\hat{A}_j^\top & \hat{M}_j^\top & 0 \\
\hat{S}_j & \hat{R}_j & \hat{B}_j^\top & \hat{L}_j^\top & 0 \\
-\hat{A}_j & \hat{B}_j & 0 & 0 & 0 \\
\hat{M}_j & \hat{L}_j & 0 & 0 & I \\
0 & 0 & 0 & \Theta_j & \Lambda_j
\end{bmatrix}
\begin{bmatrix}
X_1 \\ X_2 \\ X_3 \\ X_4 \\ X_5
\end{bmatrix}
=
\begin{bmatrix}
Y_1 \\ Y_2 \\ Y_3 \\ Y_4 \\ Y_5
\end{bmatrix},
\end{align}
given $Y_1\in\mathbb{R}^{n_j(T+1)\times q}$,
$Y_2\in\mathbb{R}^{m_jT \times q}$,
$Y_3\in\mathbb{R}^{\nu_j(T+1) \times q}$,
$Y_4\in\mathbb{R}^{\nu_j(T+1) \times q}$,
$Y_5\in\mathbb{R}^{\nu_j(T+1) \times q}$, can be constructed as
follows, for $j=1,\ldots,N$ with $\hat{E}_{N+1}=0$:
\begin{align*}
\begin{bmatrix}
X_1 \\ X_2 \\ X_3
\end{bmatrix}
&= \Psi_j \Omega_j \Psi_j^\top 
\left(
\begin{bmatrix}
Y_1\\ Y_2 \\ Y_3
\end{bmatrix}
+\begin{bmatrix}
\hat{M}_j^\top \Theta_j^{-1}\Lambda_j & -\hat{M}_j^\top \Theta_j^{-1} \\ 
\hat{L}_j^\top \Theta_j^{-1}\Lambda_j & -\hat{L}_j^\top \Theta_j^{-1} \\ 
0 & 0
\end{bmatrix}
\begin{bmatrix}
Y_4 \\ Y_5
\end{bmatrix}
\right) \quad \text{ and } \\
\begin{bmatrix}
X_4 \\ X_5 
\end{bmatrix}
&= 
\begin{bmatrix} 
-\Theta_j^{-1}\Lambda_j & \Theta_j^{-1} \\ I & 0
\end{bmatrix} \left(
-\begin{bmatrix} 
\hat{M}_j & \hat{L}_j \\
0 & 0 
\end{bmatrix}
\begin{bmatrix}
X_1 \\ X_2
\end{bmatrix}
+ 
\begin{bmatrix}
Y_4 \\ Y_5
\end{bmatrix}
\right) 
\end{align*}
where
\begin{align}
\label{eq:PsiInv}
\Psi_j &= 
\begin{bmatrix}
I & 0 & 0 \\ 
-\tilde{R}_j^{-1}\tilde{S}_j & I & -\tilde{R}_j^{-1}\tilde{B}_j^\top\tilde{A}_j^{-\top} \\
0 & 0 & \tilde{A}_j^{-\top}
\end{bmatrix},
\\
\label{eq:OmegaInv}
\Omega_j  &=
\begin{bmatrix}
\tilde{Z}_j
  & 0 &
-\tilde{W}_j^\top
\\
  0 & \tilde{R}_j^{-1} & 0 \\
  -\tilde{W}_j & 0 & -\tilde{W}_j\tilde{Q}_j
\end{bmatrix},
\\
\label{eq:Ztil}
\tilde{Z}_j &=
  \tilde{A}^{-1}\tilde{B}
  (\tilde{R}
   +\tilde{B}^\top\tilde{A}^{-\top}\tilde{Q}\tilde{A}^{-1}\tilde{B})^{-1} 
  \tilde{B}^\top\tilde{A}^{-\top},
\\
\label{eq:Wtil}
\tilde{W}_j &= I - \tilde{Q}_j\tilde{Z}_j,\\
\label{eq:Htil}
\tilde{H}_j &=
\begin{bmatrix}
\bar{Q}_j & \tilde{S}_j^\top \\ \tilde{S}_j & \tilde{R}_j 
\end{bmatrix}
= 
\begin{bmatrix}
\hat{Q}_j
-
\hat{E}_{j+1}^\top(\Sigma_{j+1}^{-1})_{33}\hat{E}_{j+1} & \hat{S}_j^\top \\ 
\hat{S}_j & \hat{R}_j
\end{bmatrix} + 
\begin{bmatrix}
\hat{M}_j^\top \\ \hat{L}_j^\top
\end{bmatrix}\Theta_j^{-1}\Lambda_j 
\begin{bmatrix} \hat{M}_j & \hat{L}_j \end{bmatrix} \succeq 0,
\end{align}
$\tilde{Q}_j = \bar{Q}_j-\tilde{S}_j^\top\tilde{R}_j^{-1}\tilde{S}_j \succeq
0$, $\tilde{A}_j=\hat{A}_j+\hat{B}_j\tilde{R}_j^{-1}\tilde{S}_j$ is
non-singular and $\tilde{B}_j=\hat{B}_j$.
\end{lemma}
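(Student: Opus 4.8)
The plan is to exhibit the stated formulas as the outcome of block Gaussian elimination on $\Sigma_j$, reducing \eqref{eq:tosolve} to a condensed saddle-point system that is then inverted by a congruence. I would proceed down the cascade, $j=N,N-1,\dots,1$, so that $\Sigma_{j+1}^{-1}$ and in particular its $(3,3)$ block $(\Sigma_{j+1}^{-1})_{33}$ is already in hand; the base case $\Sigma_N=D_N$ has the displayed block form with the $\hat E_{N+1}=0$ correction absent. First I would record that $\Sigma_j$ has exactly the $5\times5$ form shown in \eqref{eq:tosolve}: by \eqref{eq:sigma} and the sparsity \eqref{eq:Upsilonj}, the product $\Upsilon_{j+1}^\top\Sigma_{j+1}^{-1}\Upsilon_{j+1}$ selects only the $(3,3)$ block of $\Sigma_{j+1}^{-1}$ and deposits $\hat E_{j+1}^\top(\Sigma_{j+1}^{-1})_{33}\hat E_{j+1}$ in the $(1,1)$ position, leaving every other block of $D_j$ untouched. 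Block-row $5$ then gives $X_4=\Theta_j^{-1}(Y_5-\Lambda_j X_5)$ and block-row $4$ gives $X_5=Y_4-\hat M_j X_1-\hat L_j X_2$, with $\Theta_j\succ0$ along the central path; these are precisely the stated formulas for $[X_4;X_5]$.

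Substituting these back into block-rows $1$ and $2$ (whose $(\cdot,4)$ entries are $\hat M_j^\top$ and $\hat L_j^\top$) collapses the system to the $3\times3$ condensed form $G_j[X_1;X_2;X_3]=\text{RHS}$, where $G_j=\left[\begin{smallmatrix}\bar Q_j & \tilde S_j^\top & -\hat A_j^\top\\ \tilde S_j & \tilde R_j & \hat B_j^\top\\ -\hat A_j & \hat B_j & 0\end{smallmatrix}\right]$, whose top-left $2\times2$ block is exactly $\tilde H_j$ of \eqref{eq:Htil}, and whose modified right-hand side matches the bracketed expression premultiplying $\Psi_j\Omega_j\Psi_j^\top$. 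The heart of the proof, and where I expect the real effort, is then showing $G_j^{-1}=\Psi_j\Omega_j\Psi_j^\top$, equivalently $\Psi_j^\top G_j\Psi_j=\Omega_j^{-1}$. I would factor $\Psi_j$ of \eqref{eq:PsiInv} as two elementary block operations. Congruence by $\left[\begin{smallmatrix}I&0&0\\-\tilde R_j^{-1}\tilde S_j&I&0\\0&0&I\end{smallmatrix}\right]$ completes the square in the control variable, annihilating $\tilde S_j$, converting the $(1,1)$ block to $\tilde Q_j=\bar Q_j-\tilde S_j^\top\tilde R_j^{-1}\tilde S_j$ and the dynamics blocks $-\hat A_j$ to $-\tilde A_j=-(\hat A_j+\hat B_j\tilde R_j^{-1}\tilde S_j)$; congruence by the remaining factor scales the costate block by $\tilde A_j^{-\top}$ and produces the normalized matrix $\left[\begin{smallmatrix}\tilde Q_j&0&-I\\0&\tilde R_j&0\\-I&0&-\tilde A_j^{-1}\hat B_j\tilde R_j^{-1}\hat B_j^\top\tilde A_j^{-\top}\end{smallmatrix}\right]$. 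Writing $F=\tilde A_j^{-1}\hat B_j$, $\Gamma=\tilde R_j+F^\top\tilde Q_j F$, $M=F\tilde R_j^{-1}F^\top$, so that $\tilde Z_j=F\Gamma^{-1}F^\top$ by \eqref{eq:Ztil} and $\tilde W_j=I-\tilde Q_j\tilde Z_j$ by \eqref{eq:Wtil}, I would confirm $\Omega_j$ of \eqref{eq:OmegaInv} inverts this normalized matrix by direct block multiplication; the only non-routine identities are $\tilde Z_j(I+\tilde Q_j M)=M$ and $\tilde W_j(I+\tilde Q_j M)=I$, both of which collapse to the single fact $\Gamma^{-1}(\tilde R_j+F^\top\tilde Q_j F)\tilde R_j^{-1}=\tilde R_j^{-1}$, i.e. a matrix-inversion-lemma manipulation.

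Finally I would assemble the well-definedness claims the statement packages with the formulas, which simultaneously underpin the nonsingularity of $\Sigma_j$ asserted in Lemma~\ref{lem:solved}: $\tilde R_j=\hat R_j+\hat L_j^\top\Theta_j^{-1}\Lambda_j\hat L_j\succ0$ since $\hat R_j\succ0$ and $\Theta_j^{-1}\Lambda_j\succeq0$; $\tilde A_j$ is nonsingular because $\hat A_j$ is block unit-lower-triangular and $\hat B_j\tilde R_j^{-1}\tilde S_j$ is strictly block-lower-triangular; and $\Gamma\succ0$ since $\tilde R_j\succ0$, $\tilde Q_j\succeq0$. The genuinely delicate point, and the main obstacle, is the semidefiniteness bookkeeping: proving $\tilde H_j\succeq0$ (hence $\tilde Q_j\succeq0$) in \eqref{eq:Htil} requires $\hat E_{j+1}^\top(\Sigma_{j+1}^{-1})_{33}\hat E_{j+1}\preceq0$, i.e. that the costate block $(\Sigma_{j+1}^{-1})_{33}$ is negative semidefinite. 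I would close this inductively together with the construction itself: reading off $(\Sigma_j^{-1})_{33}=-\tilde A_j^{-\top}\tilde W_j\tilde Q_j\tilde A_j^{-1}$ from $G_j^{-1}=\Psi_j\Omega_j\Psi_j^\top$, a Schur-complement argument shows $\tilde W_j\tilde Q_j=\tilde Q_j-\tilde Q_j F\Gamma^{-1}F^\top\tilde Q_j\succeq0$, so the requisite sign propagates down the recursion. Exhibiting $\Sigma_j^{-1}$ in this way also furnishes its invertibility, whence the solution of \eqref{eq:tosolve} is unique as claimed.
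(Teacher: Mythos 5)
Your proposal is correct and follows essentially the same route as the paper's proof: eliminating $(X_4,X_5)$ via the invertible block $\left[\begin{smallmatrix} 0 & I \\ \Theta_j & \Lambda_j \end{smallmatrix}\right]$, condensing to the saddle-point matrix $G_j$ with $11$-block data $\tilde{H}_j$, applying the congruence $\Psi_j^\top G_j \Psi_j = \Omega_j^{-1}$ (the paper's (\ref{eq:QQinv})), and recovering (\ref{eq:Ztil})--(\ref{eq:Wtil}) by the Sherman--Morrison--Woodbury manipulation, which is exactly what your two ``non-routine identities'' amount to. The only (immaterial) organizational difference is that the paper certifies $(\Sigma_{j+1}^{-1})_{33}\preceq 0$ by citing Lemma~\ref{lem:QRS}, whose proof uses a symmetric square-root argument, through the induction in the proof of Lemma~\ref{lem:solved}, whereas you re-derive the same fact inline from $(\Sigma_j^{-1})_{33}=-\tilde{A}_j^{-\top}\tilde{W}_j\tilde{Q}_j\tilde{A}_j^{-1}$ and the Schur-complement bound $\tilde{W}_j\tilde{Q}_j=\tilde{Q}_j-\tilde{Q}_jF\Gamma^{-1}F^\top\tilde{Q}_j\succeq 0$ (in your notation), which coincides with the paper's expression via a push-through identity.
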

\begin{remark}
  The cost of computing $\tilde{A}_j^{-1}$ is order $T$ because of the
  (lower) block bi-diagonal structure of $\tilde{A}_j$. The
  $(m_jT)\times (m_jT)$ symmetric positive semi-definite matrix
  $(\tilde{R}_j + \tilde{B}_j^\top \tilde{A}_j^{-\top} \tilde{Q}_j
  \tilde{A}_j^{-1}\tilde{B}_j)\succ 0$
  is full, on the other hand. So the computation cost of inversion is
  order $T^3$, whereby an approach to computing $\Sigma_j$ based on
  Lemma~\ref{lem:invertSigma} scales as $T^3$. An alternative approach
  based on inversion of $\tilde{H}_j$, assuming that
  it is non-singular by requiring $$\left[ \begin{smallmatrix} \hat{Q}_j & \hat{S}_j^\top \\
      \hat{S}_j & \hat{R}_j \end{smallmatrix} \right]\succ 0,$$
  for example, would also involve computation cost that scales as
  $T^3$ since the $(n_j(T+1))\times(n_j(T+1))$ matrix
  $\bar{Q}_j= \hat{Q}_j -
  \hat{E}_{j+1}^\top(\Sigma_{j+1}^{-1})_{33}\hat{E}_{j+1}$
  in the $11$-block of $\tilde{H}_j$ is full for $j<N$. By constrast,
  as seen in~\cite{WangBoyd10,domahidi2012efficient}, for example,
  taking such an approach can be fruitful within the context of
  exploiting temporal structure in optimal control problems, since the
  $11$-block (i.e., $Q$ block) of the corresponding $\Sigma$ matrix is
  sparse in this case.
\end{remark}

Finally, it is of note that the calculations required to implement the
solution of (\ref{eq:linearized}) according to Lemma~\ref{lem:solved}
can be distributed among processing agents, one for each sub-system,
connected in a linear network that mirrors the underlying cascade of
sub-system models. Indeed, the agent associated with sub-system
$j\in\{2,\ldots,N-1\}$ needs to send $\Upsilon_j\Sigma_j^{-1}\tilde{\rho}_j$
to, and receive $\Upsilon_{j}\delta_{j-1}$ from, the agent for $j-1$.
Moreover, the processing agent for subsystem $j\in\{1,\ldots,N-1\}$
only needs access to $\hat{E}_{j+1}$ (i.e., the influence of $j$ on
$j+1$) and no further model, performance index or constraint data from
other sub-systems. The processing agent for $j=1$ need only
communicate with the agent for $j=2$ and the processing agent for
$j=N$ need only communicate with the agent for $j=N-1$. For each
Newton iteration of the interior-point method, information is
communicated in a sequence of steps, once up the cascade, and then
once down the cascade, without further iteration. Calculation of the
the duality gap and step size updates can be determined by a similar
pattern of up and then down exchange of information. Agent pipelining
by the provision of buffering in the communication could be exploited
to improve throughput, without of course improving latency, which may
degrade slightly. Of course, it is necessary to process a sufficient
number of Newton updates in the interior-point method. One of the
appealing features of interior point methods is the typically small
number of iterations (e.g., ten to fifteen) needed to reach a good
solution to the QP. As such, the overall communication overhead of a
distributed version of the method is low. The localization of model
and other problem data can also be considered advantageous from
security and privacy perspectives.

\section{Example} \label{sec:examp} Numerical results are obtained by
applying the preceding developments to model data for an automated
irrigation channel, within the context of a water-level reference
planning problem.
A state-space model of the form (\ref{eq:tempmodel}) can be
constructed for channels that operate under decentralized
distant-downstream control~\cite{Cantoni07,Soltanian}; each sub-system
corresponds to a stretch of channel between adjacent flow regulators
called a pool.
An optimal control problem can be formulated to determine the
water-level reference input for each pool, across a planning horizon
for which a load forecast may be known, subject to hard constraints on
the water-levels and flows.
For this example, the model data of the pools, the distributed
controllers, the discretization sample-period, and constraint levels
are all borrowed from~\cite{PaperAmir}; the many pool channels
considered here are constructed by concatenating sections of the
channel considered there. In the model for each sub-system (i.e.,
pool) there is one control input, the water-level reference, and four
states, including two for the water-level dynamics and two for the
PI-type feedback controller that sets the upstream inflow on the basis
of the measured downstream water-level error. Box type constraints on
the water-level and controller flow output states are to be satisfied
in addition to the water-level reference input constraints. For each
sub-system $j\in\{1,\ldots,N\}$, the following hold: $n(j)=4$;
$m(j)=1$; and $\nu(j)=6$. The other model parameters (e.g., entries of
state-space matrices) are not uniform along the channel.

\begin{figure}[t]
\centering
\begin{tikzpicture}
\node[] at (0,0) {\includegraphics[width=\linewidth]{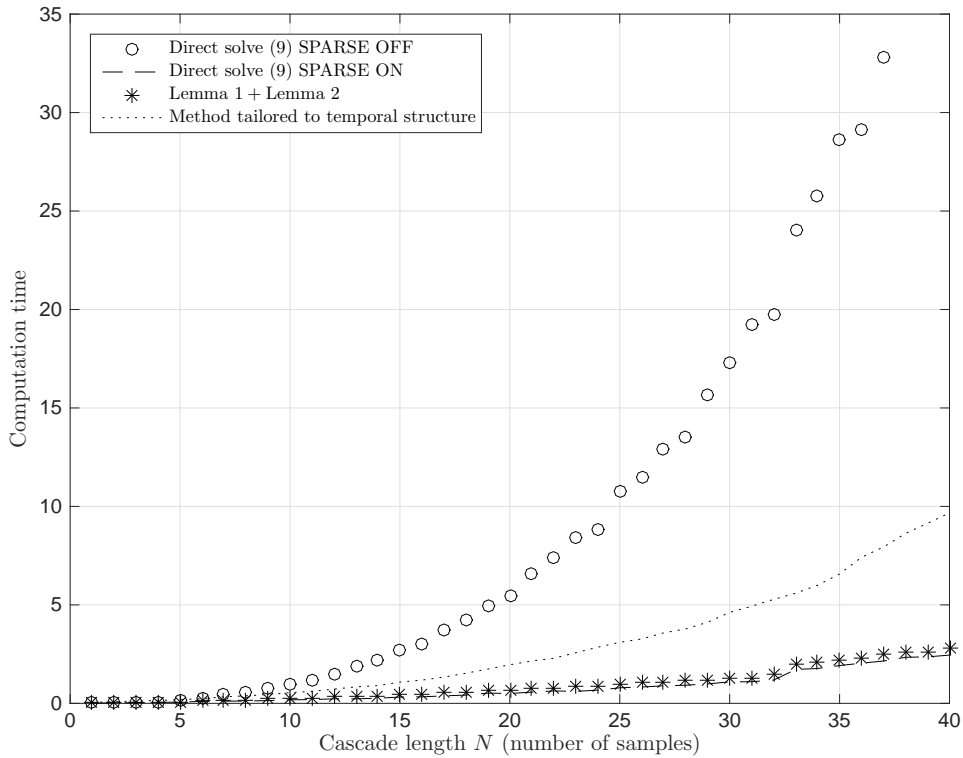}};
\end{tikzpicture}
\caption{\label{fig:spatialvarn} Computation time for 16 Newton
  iterations of the interior point method in the following cases for
  varying $N$ and fixed $T=5$:
  $(i)$ Solving (\ref{eq:linearized}) directly with sparsity exploitation
  disabled (circle); $(ii)$ Solving (\ref{eq:linearized}) directly with
  sparsity exploitation enabled (dashed); $(iii)$ Solving (\ref{eq:linearized})
  via Lemma~\ref{lem:solved} and Lemma~\ref{lem:invertSigma} (star); and
  $(iv)$ Solving (\ref{eq:linearized}) with a method tailored to the
  temporal structure (dot).}
\end{figure}

Figure~\ref{fig:spatialvarn} shows the MATLAB~2014b computation time
(in seconds with forced single computation thread) of exactly sixteen
iterations of the interior point method described above for different
ways of solving (\ref{eq:linearized}). The number of sub-systems $N$
is varied from $1$ to $40$, with fixed time-horizon $T=5$. In all
cases the duality gap is less than $10^{-3}$ after sixteen
iterations. The following cases are considered: $(i)$ Direct solution
of (\ref{eq:linearized}) with sparsity exploitation disabled by
perturbing the block tridiagonal matrix $X$ on the left-hand side
(i.e., \texttt{(X+eps)$\backslash$rho}); $(ii)$ Direct solution of
(\ref{eq:linearized}) with sparsity exploitation enabled (i.e.,
\texttt{X=sparse(X);~X$\backslash$rho}); $(iii)$ Solution of
(\ref{eq:linearized}) via Lemma~\ref{lem:solved} and
Lemma~\ref{lem:invertSigma}; and $(iv)$ Solution of
(\ref{eq:linearized}) via a method tailored to exploit the temporal
structure also present in $X$, along the lines of
Lemma~\ref{lem:solved}. As expected, the approach that does not
exploit structure incurs a computation time that grows as $N^3$. The
use of Lemma~\ref{lem:solved} with Lemma~\ref{lem:invertSigma}, on the
other hand, scales linearly with $N$. Moreover, the performance
achieved is as good as enabling MATLAB to exploit structure when
solving (\ref{eq:linearized}) directly, which of course requires the
MATLAB environment. Also note that a method tailored to the temporal
structure of $X$ does not scale linearly.
By contrast, Figure~\ref{fig:spatialvarT} shows the computation time
for fixed $N=10$ and varying time-horizon $T$. As expected, the
approaches in cases $(i)$ and $(iii)$ scale as $T^3$.
This is similar to the cubic scaling of computation cost with the
dimension of the state in methods that are tailored to exploit the
temporal structure of optimal control problems;
e.g.,~\cite{WangBoyd10,domahidi2012efficient}. Direct solution of
(\ref{eq:linearized}) with sparsity exploitation enabled appears to
asymptotically scale linearly with $T$, in a way that is consistent
with the method that is tailored to exploit temporal structure.

\begin{figure}[tbp]
\centering
\begin{tikzpicture}
  \node[] at (0,0) {\includegraphics[width=\linewidth]{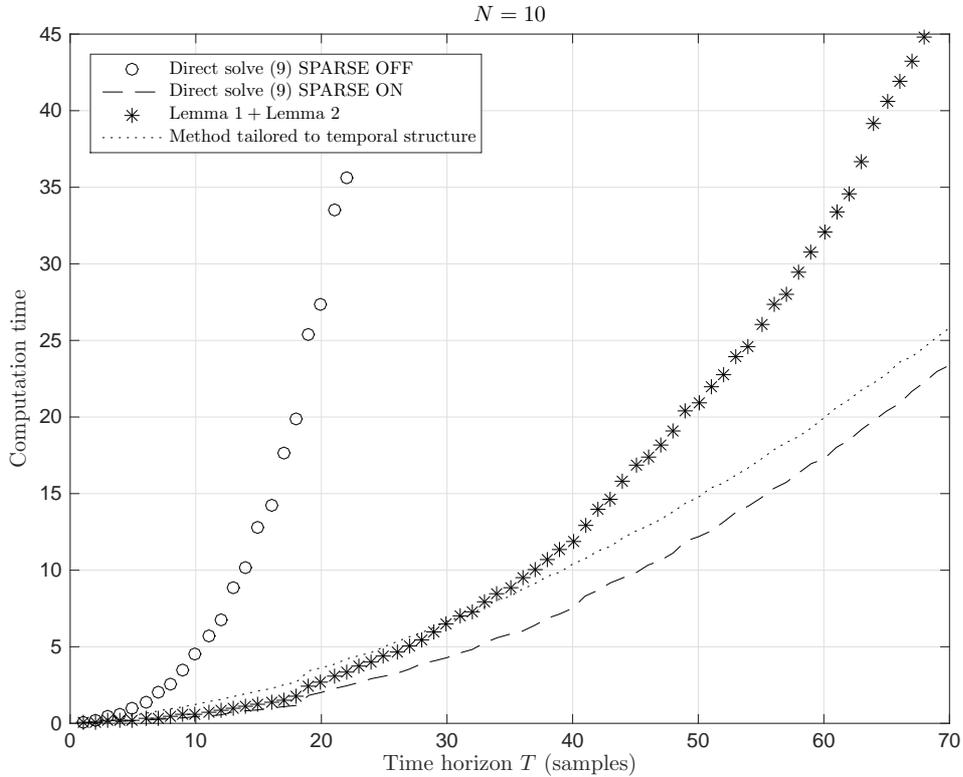}};
\end{tikzpicture}
\caption{\label{fig:spatialvarT} Computation time for 16 Newton
  iterations of the interior point method in the following cases for
  varying $T$ and fixed $N=10$: $(i)$ Solving (\ref{eq:linearized})
  directly with sparsity exploitation disabled (circle); $(ii)$ Solving
  (\ref{eq:linearized}) directly with sparsity exploitation enabled
  (dashed); $(iii)$ Solving (\ref{eq:linearized}) via
  Lemma~\ref{lem:solved} and Lemma~\ref{lem:invertSigma} (star); and
  $(iv)$ Solving (\ref{eq:linearized}) with a method tailored to the
  temporal structure (dot).}
\end{figure}

\section{Conclusion}
\label{sec:conc}

The main contribution is a scalable interior-point method for
computing the solution of constrained discrete-time optimal control
problems with cascade dynamics, over a finite prediction horizon. By
exploiting the spatial structure arising from the cascade dynamics,
the computation cost of each step scales linearly with the number of
sub-systems along the cascade. By constrast, the method exhibits cubic
growth of computation time as the prediction horizon increases. Direct
application of standard methods, typically tailored to exploit the
temporal structure of optimal control problems in order to achieve
linear scaling with the time horizon, would yield complexity that
grows as the cube of the number of system states, and thus, the number
of sub-systems. The main developments are illustrated by numerical
example on model data for an automated irrigation channel. A topic of
ongoing research pertains to extending the main ideas to exploit
directed and undirected spatial propagation of information in tree
networks of dynamical systems. Another concerns the design of custom
hardware for distributed algorithm implementation.

\appendix

\section{A technical lemma and proofs}

\begin{lemma} \label{lem:QRS} Given
  $Q=Q^\top\in\mathbb{R}^{n\times n}$,
  $R=R^\top\in\mathbb{R}^{m\times m}$, $S\in\mathbb{R}^{m\times n}$,
  $A\in\mathbb{R}^{n\times n}$, $B\in\mathbb{R}^{n\times m}$,
  $M\in\mathbb{R}^{\nu \times n}$, $L\in\mathbb{R}^{\nu \times m}$, suppose
  that $R\succ 0$, $Q\succeq 0$, $Q-S^\top R^{-1} S\succeq 0$,
  and $\tilde{A}=A+BR^{-1}S$ is non-singular. Let
\begin{align*}
D = \begin{bmatrix}
Q & S^\top & -A^\top & M^\top & 0 \\
S & R & B^\top & L^\top & 0 \\
-A & B & 0 & 0 & 0 \\
M & L & 0 & 0 & I \\
0 & 0 & 0 & \Theta & \Lambda
\end{bmatrix},
\end{align*}
where $\Theta=\diag(\theta_1,\ldots,\theta_\nu)\succ 0$ and
$\Lambda=\diag(\lambda_1,\ldots,\lambda_\nu)\succeq 0$. Then $D$ is
non-singular. Moreover, the $33$-block of $D^{-1}$ is given by 
\begin{align*}
-\tilde{A}^{-\top}\tilde{Q}^{\frac{1}{2}}(I +
  \tilde{Q}^{\frac{1}{2}}\tilde{A}^{-1} \tilde{B} \tilde{R}^{-1}
  \tilde{B}^\top \tilde{A}^{-\top}
  \tilde{Q}^{\frac{1}{2}})^{-1}\tilde{Q}^{\frac{1}{2}}\tilde{A}^{-1}\preceq 0,
\end{align*}
where 
\begin{align*}
\begin{bmatrix}
\bar{Q} & \tilde{S}^\top \\ \tilde{S} & \tilde{R} 
\end{bmatrix}
= 
\begin{bmatrix}
Q & S^\top \\ S & R
\end{bmatrix} + 
\begin{bmatrix}
M^\top \\ L^\top
\end{bmatrix}\Theta^{-1}\Lambda 
\begin{bmatrix} M & L \end{bmatrix} \succeq 0,
\end{align*}
$\tilde{Q} = \bar{Q}-\tilde{S}^\top\tilde{R}^{-1}\tilde{S} \succeq 0$ and
$\tilde{B}=B$.
\end{lemma}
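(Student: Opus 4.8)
The plan is to solve the linear system $Dw=e$, with $w=(w_1,\dots,w_5)$ and $e=(e_1,\dots,e_5)$ partitioned conformally with the five block columns, by successive block elimination that exploits the two sources of strict positivity, $\Theta\succ 0$ and $R\succ 0$. First I would use the last two block rows, $\Theta w_4+\Lambda w_5=e_5$ and $Mw_1+Lw_2+w_5=e_4$, to read off $w_5=e_4-Mw_1-Lw_2$ and then $w_4=\Theta^{-1}(e_5-\Lambda w_5)$; since $\Theta\succ 0$ this step is invertible and, crucially, does not require $\Lambda$ to be non-singular. Substituting $w_4$ back into the first two block rows absorbs the complementarity data exactly into the augmented blocks $\bar{Q},\tilde{S},\tilde{R}$ of the statement, reducing $D$ to the $3\times 3$ saddle-point matrix with block rows $(\bar{Q},\tilde{S}^\top,-A^\top)$, $(\tilde{S},\tilde{R},B^\top)$, $(-A,B,0)$. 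Here $\tilde{R}=R+L^\top\Theta^{-1}\Lambda L\succ 0$ because $R\succ 0$ and $\Theta^{-1}\Lambda\succeq 0$, and the quoted $\succeq 0$ property of the augmented Hessian yields $\tilde{Q}=\bar{Q}-\tilde{S}^\top\tilde{R}^{-1}\tilde{S}\succeq 0$ by the Schur complement.

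Next, using $\tilde{R}\succ 0$ to eliminate $w_2$ from the second reduced row, completing the square produces the $2\times 2$ saddle system
\begin{align*}
\begin{bmatrix} \tilde{Q} & -\tilde{A}^\top \\ -\tilde{A} & -B\tilde{R}^{-1}B^\top \end{bmatrix}
\begin{bmatrix} w_1 \\ w_3 \end{bmatrix}
= \begin{bmatrix} c_1 \\ c_3 \end{bmatrix},
\end{align*}
with $\tilde{A}=A+B\tilde{R}^{-1}\tilde{S}$ (the matrix of Lemma~\ref{lem:invertSigma}). Since $\tilde{A}$ is non-singular, the second row gives $w_1=-\tilde{A}^{-1}(c_3+B\tilde{R}^{-1}B^\top w_3)$, and substitution into the first row leaves $(\tilde{A}^\top+\tilde{Q}\tilde{A}^{-1}B\tilde{R}^{-1}B^\top)w_3=-(c_1+\tilde{Q}\tilde{A}^{-1}c_3)$. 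Factoring the coefficient as $\tilde{A}^\top(I+PC)$, with $C=B\tilde{R}^{-1}B^\top\succeq 0$ and $P=\tilde{A}^{-\top}\tilde{Q}\tilde{A}^{-1}=K^\top K$, $K=\tilde{Q}^{1/2}\tilde{A}^{-1}$, Sylvester's determinant identity gives $\det(I+PC)=\det(I+KCK^\top)>0$ because $I+KCK^\top\succ 0$; hence the coefficient is non-singular, $w_3$ is determined, and back-substitution recovers $w_1,w_2,w_4,w_5$ uniquely, so $D$ is non-singular.

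Finally, to read off the $33$-block I would take $e_1=e_2=e_4=e_5=0$ with $e_3$ free, which the bookkeeping above renders as $c_1=0$ and $c_3=e_3$, so that
\begin{align*}
w_3=-(\tilde{A}^\top+\tilde{Q}\tilde{A}^{-1}B\tilde{R}^{-1}B^\top)^{-1}\tilde{Q}\tilde{A}^{-1}e_3=-(I+PC)^{-1}P\,e_3,
\end{align*}
whence $(D^{-1})_{33}=-(I+PC)^{-1}P$. It remains to recast this symmetrically: the push-through identity $(I+K^\top KC)^{-1}K^\top=K^\top(I+KCK^\top)^{-1}$ gives $(I+PC)^{-1}P=K^\top(I+KCK^\top)^{-1}K$, which with $\tilde{B}=B$ is precisely $-\tilde{A}^{-\top}\tilde{Q}^{1/2}(I+\tilde{Q}^{1/2}\tilde{A}^{-1}\tilde{B}\tilde{R}^{-1}\tilde{B}^\top\tilde{A}^{-\top}\tilde{Q}^{1/2})^{-1}\tilde{Q}^{1/2}\tilde{A}^{-1}$, and is manifestly $\preceq 0$ since $I+KCK^\top\succ 0$. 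I expect the main obstacle to be organising exactly this last manoeuvre: because $\tilde{Q}$ (equivalently $\bar{Q}$) need not be invertible, one cannot eliminate $w_1$ through $\tilde{Q}$, so the argument must be routed through the non-singular $\tilde{A}$, and the push-through identity is what simultaneously exposes the symmetry and certifies the sign. The supporting semidefiniteness facts ($\tilde{R}\succ 0$, the augmented Hessian $\succeq 0$, and hence $\tilde{Q}\succeq 0$) are routine Schur-complement checks.
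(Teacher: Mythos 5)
Your proof is correct and takes essentially the same route as the paper's: you eliminate the $(\theta,\lambda)$ rows using only $\Theta\succ 0$ (the paper inverts $\left[\begin{smallmatrix} 0 & I \\ \Theta & \Lambda \end{smallmatrix}\right]$ and takes a Schur complement), absorb the inequality data into $\bar{Q},\tilde{S},\tilde{R}$, route the remaining elimination through $\tilde{R}\succ 0$ and the non-singular $\tilde{A}$ precisely because $\tilde{Q}$ may be singular, and symmetrize the $33$-block with the push-through identity — the paper packages these steps as explicit congruences and certifies invertibility via $\mathrm{spec}(XY)\cup\{0\}=\mathrm{spec}(YX)\cup\{0\}$ rather than your Sylvester determinant identity, an immaterial difference. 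One detail worth recording: like the paper's own proof, you in fact use non-singularity of $\tilde{A}=A+B\tilde{R}^{-1}\tilde{S}$ (the definition appearing in Lemma~\ref{lem:invertSigma}), not of $A+BR^{-1}S$ as the lemma's statement literally defines $\tilde{A}$; in the paper's application both are block lower bi-diagonal with identity diagonal blocks and hence non-singular, so nothing breaks, but your reading is the one the argument actually needs.
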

\begin{proof}
  First note that $\Theta\succ 0$ is invertible and that
  $$\begin{bmatrix} 0 & I \\ \Theta & \Lambda \end{bmatrix}^{-1}
  = \begin{bmatrix} -\Theta^{-1}\Lambda & \Theta^{-1} \\ I &
    0 \end{bmatrix}.$$
  As such, it follows that $D$ is invertible
  if and only if the Schur complement
\begin{align*}
\begin{bmatrix} 
\bar{Q} & \tilde{S}^\top & -A^\top \\ \tilde{S} & \tilde{R} & B^\top \\ -A & B & 0
\end{bmatrix} 
= 
\begin{bmatrix} 
Q & S^\top & -A^\top \\ S & R & B^\top \\ -A & B & 0
\end{bmatrix}
- \begin{bmatrix}
M^\top & 0 \\ L^\top & 0 \\ 0 & 0
\end{bmatrix}
\begin{bmatrix} 0 & I \\ \Theta & \Lambda \end{bmatrix}^{-1}
\begin{bmatrix} M & L & 0 \\ 0 & 0 & 0 \end{bmatrix}
\end{align*}
is non-singular. Moreover, the inverse of this matrix, when it exists,
is precisely the matrix comprising the first three block rows and
columns of $D^{-1}$; as such, the $33$-blocks coincide.

Now note that $\tilde{R} = R + L^\top\Theta^{-1}\Lambda L \succ 0$,
since the diagonal matrix $\Theta^{-1}\Lambda \succeq 0$ and
$R \succ 0$, and
\begin{align} \label{eq:QQinv}
&\begin{bmatrix}
\tilde{Q} & 0 & -I \\
0 & \tilde{R} & 0 \\
-I & 0 & -\tilde{A}^{-1}\tilde{B}\tilde{R}^{-1}\tilde{B}^\top
\tilde{A}^{-\top}
\end{bmatrix}
\\
&\quad\quad\quad\quad 
=
\begin{bmatrix}
I & -\tilde{S}^\top\tilde{R}^{-1} & 0 \\
0 & I & 0 \\
0 & -\tilde{A}^{-1}\tilde{B}\tilde{R}^{-1} & \tilde{A}^{-1}
\end{bmatrix}
\begin{bmatrix} 
\bar{Q} & \tilde{S}^\top & -A^\top \\ \tilde{S} & \tilde{R} & B^\top \\ -A & B & 0
\end{bmatrix} 
\begin{bmatrix}
I & 0 & 0 \\ 
-\tilde{R}^{-1}\tilde{S} & I & -\tilde{R}^{-1}\tilde{B}^\top\tilde{A}^{-\top} \\
0 & 0 & \tilde{A}^{-\top}
\end{bmatrix}, \nonumber
\end{align}
where
$\tilde{Q} = \bar{Q} - \tilde{S}^\top \tilde{R}^{-1}\tilde{S}\succeq
0$ because
\begin{align*}
\begin{bmatrix}
\bar{Q} & \tilde{S}^\top \\
\tilde{S} & \tilde{R}
\end{bmatrix} = 
\begin{bmatrix}
Q & S^\top \\ S & R
\end{bmatrix} + \begin{bmatrix} M^\top \\ L^\top \end{bmatrix}
\Theta^{-1}\Lambda \begin{bmatrix} M \\ L \end{bmatrix}\succeq 0.
\end{align*}
Therefore, $D$ is non-singular if and only if 
\begin{align} \label{eq:Qinv}
\begin{bmatrix}
\tilde{Q} & -I \\ -I & -\tilde{A}^{-1}\tilde{B}\tilde{R}^{-1}\tilde{B}^\top \tilde{A}^{-\top}
\end{bmatrix}
= 
\begin{bmatrix}
I & -\tilde{Q} \\ 0 & I
\end{bmatrix}
\begin{bmatrix}
0 & -(I+\tilde{Q}\tilde{A}^{-1}\tilde{B}\tilde{R}^{-1}\tilde{B}^\top \tilde{A}^{-\top})\\
-I & -\tilde{A}^{-1}\tilde{B}\tilde{R}^{-1}\tilde{B}^\top \tilde{A}^{-\top}
\end{bmatrix}
\end{align}
is non-singular, which is the case if and only if
$(I+\tilde{Q}\tilde{A}^{-1}\tilde{B}\tilde{R}^{-1}\tilde{B}^\top \tilde{A}^{-\top})$ is
non-singular, or equivalently, if and only if
$-1\notin\mathrm{spec}(\tilde{Q}\tilde{A}^{-1}\tilde{B}\tilde{R}^{-1}\tilde{B}^\top
\tilde{A}^{-\top})$. That later
holds because
$$\mathrm{spec}(\tilde{Q}\tilde{A}^{-1}\tilde{B}\tilde{R}^{-1}\tilde{B}^\top\tilde{A}^{-\top})\cup\{0\}
=
\mathrm{spec}(\tilde{Q}^{\frac{1}{2}}\tilde{A}^{-1}\tilde{B}\tilde{R}^{-1}\tilde{B}^\top\tilde{A}^{-\top}
\tilde{Q}^{\frac{1}{2}})\cup\{0\} \subset\mathbb{R}_{\geq 0},$$
whereby $D$ is invertible. In particular, the $22$-block of the
inverse of the left-hand side of (\ref{eq:Qinv}) is given by
$-(I+\tilde{Q}\tilde{A}^{-1}\tilde{B}\tilde{R}^{-1}\tilde{B}^\top
\tilde{A}^{-\top})^{-1}\tilde{Q} =
-\tilde{Q}^{\frac{1}{2}}(I+\tilde{Q}^{\frac{1}{2}}\tilde{A}^{-1}\tilde{B}\tilde{R}^{-1}\tilde{B}^\top
\tilde{A}^{-\top}Q^{\frac{1}{2}})^{-1}\tilde{Q}^{\frac{1}{2}}$,
which is congruent to the $33$-block of $D^{-1}$ via the
transformation $\tilde{A}^{-1}$ in view of (\ref{eq:QQinv}), as
claimed.
\end{proof}

\subsection{Proof of Lemma~\ref{lem:solved}}
\begin{proof}
  Recall that $\hat{R}_j\succ 0$, $\hat{Q}_j\succeq 0$,
  $\hat{Q}_j-\hat{S}_j^\top \hat{R}^{-1}\hat{S}_j \succeq 0$ and
  $\tilde{A}_j = \hat{A}_j+\hat{B}_j\hat{R}_j^{-1}\hat{S}_j$ is
  non-singular; n.b., the structure of $\tilde{A}_j$ is the same as
  the block bi-diagonal structure of $\hat{A}_j$ with identity matrices
  along the block diagonal. Now using Lemma~\ref{lem:QRS}, observe
  that $D_j$ is invertible for $j=1,\ldots,N$.
  Given the structure of $\Upsilon_j$, the matrix $\Sigma_j$ is the
  same as $D_j$ except for the $11$-block, which is $\hat{Q}_j$ in the
  case of the latter and
  $\check{Q}_j=\hat{Q}_j - \hat{E}_j^\top (\Sigma_{j+1}^{-1})_{33}
  \hat{E}_j$
  in the former. By Lemma~\ref{lem:QRS}, $(D_N^{-1})_{33} \preceq 0$,
  so that $(\Sigma_{j+1}^{-1})_{33} \preceq 0$, whereby
  $\check{Q}_j \succeq 0$, and thus, $\Sigma_j$ is non-singular for
  $j=N-1$ by Lemma~\ref{lem:QRS} again. Continuing this argument
  inductively yields the invertibility of $\Sigma_j$ for
  $j=N-2,\ldots, 1$.  As such, the specified recursions for
  $\Sigma_j$, $\tilde{\rho}_j$ and $\delta_j$ are all well defined.

  With $\Sigma_N=D_N$, solving the last block of~\eqref{eq:linearized}
  gives
  $\delta_N=\Sigma_N^{-1}\rho_N+\Sigma_N^{-1}\Upsilon_N\delta_{N-1}$.
  In turn,
\begin{align*}
\begin{bmatrix}
D_1 & -\Upsilon_2^\top & 0 & \ldots & 0 \\
-\Upsilon_2 & D_2 & -\Upsilon_3^\top & \ddots & \vdots \\
0 & -\Upsilon_3 & \ddots & \ddots & 0 \\
\vdots & \ddots    & \ddots &  D_{N-2}  & -\Upsilon_{N-1}^\top \\
0 & \cdots & 0 & -\Upsilon_{N-1} & \Sigma_{N-1}
\end{bmatrix}
 \begin{bmatrix}
\delta_1 \\ \delta_2 \\ 
\vdots \\  
\vdots \\
\delta_{N-1} 
\end{bmatrix} = \begin{bmatrix}
\rho_1 \\ \rho_2 \\ 
\vdots  \\ 
\vdots \\
\tilde{\rho}_{N-1}
\end{bmatrix},
\end{align*}
where $\Sigma_{N-1}$ and $\tilde{\rho}_{N-1}$ are defined as in the
statement of the lemma.  Continuing in this fashion yields the
remaining expressions for $\Sigma_j$, $\tilde{\rho}_j$ and
$\delta_j$. The first block equation eventually becomes
$\Sigma_1 \delta_1^k=\tilde{\rho}_1$, and thus,
$\delta_1=\Sigma_1^{-1}\tilde{\rho}_1$.
\end{proof}

\subsection{Proof of Lemma~\ref{lem:invertSigma}}

\begin{proof} 
  It is shown in the proof of Lemma~\ref{lem:solved} that
  $-\hat{E}_{j+1}^\top(\Sigma_{j+1}^{-1})_{33}\hat{E}_{j+1} \succeq
  0$. As such,
 $$
\left[\begin{matrix} \hat{Q}_j & \hat{S}_j^\top \\ \hat{S}_j &
    \hat{R}_j
 \end{matrix}\right] + 
\left[\begin{matrix}
    -\hat{E}_{j+1}^\top(\Sigma_{j+1}^{-1})_{33}\hat{E}_{j+1} & 0 \\ 0 & 0
 \end{matrix}\right] \succeq 0,$$ and thus, $\tilde{Q}_j =
\bar{Q}_j - \tilde{S}_j^\top \tilde{R}^{-1} \tilde{S} \succeq 0$
as claimed. Also observe that $\tilde{A}_j$ is non-singular because
it has the same lower block bi-diagonal structure as $\hat{A}_j$ with
identity matrices along the block diagonal. Now recall that
$\Theta_j\succ 0$, whereby
$\left[\begin{smallmatrix} 0 & I \\ \Theta_j &
    \Lambda_j\end{smallmatrix} \right]^{-1}=\left[\begin{smallmatrix}
    -\Theta^{-1}\!\Lambda_j & ~\Theta^{-1} \\ I & 0 \end{smallmatrix}
\right]$.  Using this and the structure of (\ref{eq:tosolve}) yields
\begin{align*}
\begin{bmatrix}
  X_4 \\ X_5
\end{bmatrix}
=
\begin{bmatrix} 
0 & I \\ \Theta_j & \Lambda_j
\end{bmatrix}^{-1}
\left(
\begin{bmatrix}
Y_4 \\ Y_5
\end{bmatrix}
-
\begin{bmatrix}
\hat{M}_j & \hat{L}_j & 0 \\
0 & 0 & 0 
\end{bmatrix}
\begin{bmatrix}
X_1 \\ X_2 \\ X_3
\end{bmatrix}
\right)
= \begin{bmatrix} 
0 & I \\ \Theta_j & \Lambda_j
\end{bmatrix}^{-1}
\begin{bmatrix}
Y_4 \\ Y_5
\end{bmatrix}
+
\Theta_j^{-1}\Lambda_j
\begin{bmatrix}
\hat{M}_j & \hat{L}_j 
\end{bmatrix} 
\begin{bmatrix}
X_1 \\ X_2
\end{bmatrix}
\end{align*}
and
\begin{align}
\label{eq:soltop}
\begin{bmatrix}
X_1 \\ X_2 \\ X_3
\end{bmatrix}
=
\begin{bmatrix} 
\bar{Q}_j & \tilde{S}_j^{\top} & -\hat{A}_j^\top \\
\tilde{S}_j & \tilde{R}_j & \hat{B}_j^\top \\
-\hat{A}_j & \hat{B}_j & 0 
\end{bmatrix}^{-1}
\begin{bmatrix}
\begin{bmatrix}
I & 0 & 0 \\
0 & I & 0 \\ 
0 & 0 & I 
\end{bmatrix}
&
\begin{bmatrix}  
-
\begin{bmatrix} \hat{M}_j^\top & 0 \\ \hat{L}_j^\top & 0  \end{bmatrix}
\begin{bmatrix} 0 & I \\ \Theta_j & \Lambda_j \end{bmatrix}^{-1} \\
\begin{bmatrix} 0 & 0 \end{bmatrix}
\end{bmatrix}
\end{bmatrix}
\begin{bmatrix}
  \begin{bmatrix} Y_1 \\ Y_2 \\ Y_3 \end{bmatrix} \\ \begin{bmatrix}
    Y_4 \\ Y_5 \end{bmatrix}
\end{bmatrix}.
\end{align}
In view of (\ref{eq:QQinv}),
\begin{align*}
\begin{bmatrix} 
    \bar{Q}_j & \tilde{S}_j^\top & -\hat{A}_j^\top \\ \tilde{S}_j &
    \tilde{R}_j & \hat{B}_j^\top 
    \\ -\hat{A}_j & \hat{B}_j & 0
\end{bmatrix}^{-1}
&\!\!\!\!=
\begin{bmatrix}
I & 0 & 0 \\ 
-\tilde{R}_j^{-1}\tilde{S}_j & I & -\tilde{R}_j^{-1}\tilde{B}_j^\top\tilde{A}_j^{-\top} \\
0 & 0 & \tilde{A}_j^{-\top}
\end{bmatrix}\\
&\quad\quad\quad\quad
\begin{bmatrix}
\tilde{Q}_j & 0 & -I \\ 0 & \tilde{R}_j & 0 \\ -I & 0 &
-\tilde{A}_j^{-1}\tilde{B}_j\tilde{R}_j^{-1}\tilde{B}_j^\top\tilde{A}_j^{-\top} 
\end{bmatrix}^{-1}
\begin{bmatrix}
I & -\tilde{S}_j^\top\tilde{R}_j^{-1} & 0 \\
0 & I & 0 \\
0 & -\tilde{A}_j^{-1}\tilde{B}_j\tilde{R}_j^{-1} & \tilde{A}_j^{-1}
\end{bmatrix}.
\end{align*}
As such, the result follows by noting that
\begin{align}
&
\begin{bmatrix}
\tilde{Q}_j & 0 & -I \\ 0 & \tilde{R}_j & 0 \\ -I & 0 &
-\tilde{A}_j^{-1}\tilde{B}_j\tilde{R}_j^{-1}\tilde{B}_j^\top\tilde{A}_j^{-\top} 
\end{bmatrix}^{-1}
\nonumber \\
&\quad\quad=
\begin{bmatrix}
  \tilde{A}_j^{-1}\tilde{B}_j\tilde{R}_j^{-1}\tilde{B}_j^\top\tilde{A}_j^{-\top}\tilde{W}_j
  & 0 &
  \tilde{A}_j^{-1}\tilde{B}_j\tilde{R}_j^{-1}\tilde{B}_j^\top\tilde{A}_j^{-\top}
  \tilde{W}_j\tilde{Q}_j
  -I\\
  0 & \tilde{R}^{-1} & 0 \\
  -\tilde{W}_j & 0 & -\tilde{W}_j\tilde{Q}_j
\end{bmatrix}, \label{eq:OmInv}
\end{align}
where
$\tilde{W}_j = (I + \tilde{Q}_j
\tilde{A}_j^{-1}\tilde{B}_j\tilde{R}_j^{-1}\tilde{B}_j^\top\tilde{A}_j^{-\top})^{-1}$.
Indeed, the matrix in (\ref{eq:OmInv}) is precisely $\Omega_j$ in
(\ref{eq:OmegaInv}). Application of the Sherman-Morrison-Woodbury
matrix inversion lemma gives the equivalent expression, including the
expression (\ref{eq:Wtil}) for $\tilde{W}_j$, and (\ref{eq:Ztil}) for
$\tilde{Z}_j =
\tilde{A}_j^{-1}\tilde{B}_j\tilde{R}_j^{-1}\tilde{B}_j^\top\tilde{A}_j^{-\top}\tilde{W}_j$.
\end{proof}

\bibliography{ijc2016}

\end{document}